\documentclass[1p]{elsarticle}


\usepackage{lineno}
\usepackage[colorlinks=true,linkcolor=darkblue,pdfborder={0 0 0}]{hyperref}


\journal{arxiv}

\modulolinenumbers[10]


\usepackage[latin9]{inputenc}

\usepackage[OT1, T1]{fontenc}

\usepackage{amsmath}
\usepackage{amsthm}
\usepackage{amssymb}









\bibliographystyle{elsarticle-num}


\newcommand{\totphi}{\varphi}%
\newcommand{\id}{\text{id}}%
\newcommand{\mygcd}{\gcd}%
\newcommand{\mytheta}[3][]{\theta_{{#2},{#3}}}%
\newcommand{\mymin}[3][]{\min({#2},{#3})}%

\newcommand{\sz}{s^\prime}

\theoremstyle{plain}
\newtheorem{theorem}{Theorem}
\newtheorem{lemma}{Lemma}
\newtheorem{proposition}{Proposition}
\newtheorem{corollary}{Corollary}

\theoremstyle{definition}

\theoremstyle{remark}

\newtheorem{case}{Case}
\newtheorem{step}{Step}


%

\begin{document}

\begin{frontmatter}


\title{Prime number decomposition of the Fourier transform of a function of the greatest common divisor.}

\author{L.J. Holleboom\fnref{myfootnote}}
\address{Department of Mathematics and Computer Science}
\address{Karlstad            University\linebreak
Universitetsgatan 2\linebreak 651 88 Karlstad\linebreak Sweden}

\fntext[myfootnote]{Corresponding author, email: thijs.hollebooom@kau.se}




\begin{abstract}

The discrete Fourier transform of the greatest common divisor is
a multiplicative  function, if  taken with  respect to  the same
order of  the primitive  root of  unity, which  is a  well known
fact.  As such,  the transform  can  be expressed  in the  prime
factors of the argument, the explicit form of which is proven in
this paper.  Subsequently it is  shown how the procedure  can be
generalized to the  discrete Fourier transform of  a function of
the  greatest  common  divisor. From  this  representation  some
interesting relations concerning the  Euler totient function and
generalizations thereof are established.

\end{abstract}

\begin{keyword}
Discrete Fourier Transform\sep $\mygcd$\sep prime number decomposition\sep multiplicative function
\end{keyword}

\end{frontmatter}


\section{Introduction}

The discrete  Fourier transform of the  greatest common divisor,
$\mygcd$, is commonly defined as \cite{Schramm2008}

\begin{equation}\label{eq000-1}
h_m(n) = \sum_{k=1}^n \gcd(k,n)e^{-k\frac{2\pi  i}{n}m}.
\end{equation}

The function $h_m(n)$ is multiplicative  in the argument $n$ for
fixed $m$  \cite{Schramm2008,vanderKamp2013}. Also,  $h_m(n)$ is
integer  valued.  This  follows  from  the  fact  that  $h_m(n)$
can  be written  as the  Dirichlet convolution  of the  identity
function and the Ramanujan sum, as proven in \cite{Schramm2008},
and  that the  Ramanujan sum  itself  is integer,  as proven  in
\cite{Kluyver1906}.  We come  back to  this later,  see equation
\ref{eq015} below.

Although   the   multiplicativity   of   $h_m(n)$   already   is
established, as described above, we outline a direct proof below
because this property is central  in the current work. Hence the
following proposition is formulated.

\begin{proposition}\label{proposition001}

The  discrete Fourier  transform  $h_m(n)$  defined in  equation
\ref{eq000-1} is  a multiplicative function in  the argument $n$
for a fixed value of the index $m$.
\end{proposition}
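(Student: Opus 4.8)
The plan is to give a direct proof via the Chinese Remainder Theorem, rather than appealing to the Dirichlet-convolution representation recalled in the introduction, since the latter route is precisely the ``already established'' argument that Proposition \ref{proposition001} is meant to complement. So I would fix $m$, take $n=n_1n_2$ with $\gcd(n_1,n_2)=1$, and reindex the sum in equation \ref{eq000-1}. By the Chinese Remainder Theorem the map $k\mapsto(k_1,k_2)$ with $k_1\equiv k\pmod{n_1}$ and $k_2\equiv k\pmod{n_2}$ is a bijection from a complete residue system modulo $n$ onto the product of complete residue systems modulo $n_1$ and $n_2$. Writing $\bar{n}_2$ for the inverse of $n_2$ modulo $n_1$ and $\bar{n}_1$ for the inverse of $n_1$ modulo $n_2$, the inverse map is $k\equiv k_1 n_2\bar{n}_2 + k_2 n_1\bar{n}_1\pmod{n}$, and I would carry this explicit form into the two factors of the summand.

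First I would isolate the two structural facts that drive the factorization. Because $\gcd(n_1,n_2)=1$, the gcd splits across the coprime moduli, giving $\gcd(k,n)=\gcd(k,n_1)\gcd(k,n_2)=\gcd(k_1,n_1)\gcd(k_2,n_2)$. For the exponential, substituting the Chinese-Remainder expression for $k$ and using $n=n_1n_2$ together with the $n$-periodicity of $e^{-k\frac{2\pi i}{n}m}$ yields
\[
e^{-k\frac{2\pi i}{n}m}=e^{-k_1\bar{n}_2\frac{2\pi i}{n_1}m}\,e^{-k_2\bar{n}_1\frac{2\pi i}{n_2}m},
\]
since $n_2\bar{n}_2/n=\bar{n}_2/n_1$ and $n_1\bar{n}_1/n=\bar{n}_1/n_2$ modulo $1$. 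Combining these two facts, the single sum over $k$ separates into a product of independent sums over $k_1$ and $k_2$:
\[
h_m(n)=\Bigl(\sum_{k_1=1}^{n_1}\gcd(k_1,n_1)\,e^{-k_1\bar{n}_2\frac{2\pi i}{n_1}m}\Bigr)\Bigl(\sum_{k_2=1}^{n_2}\gcd(k_2,n_2)\,e^{-k_2\bar{n}_1\frac{2\pi i}{n_2}m}\Bigr).
\]

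The step I expect to be the main obstacle is that these two factors are not literally $h_m(n_1)$ and $h_m(n_2)$: the index has been twisted to $m\bar{n}_2$ and $m\bar{n}_1$. This is exactly the point flagged by the phrase about taking the transform ``with respect to the same order of the primitive root of unity'' — the roots appearing are of order $n_1$ and $n_2$, but the multiplier of $m$ has changed. To remove the twist I would reindex the first factor by $j\equiv k_1\bar{n}_2\pmod{n_1}$, which is a bijection of residues modulo $n_1$ because $\bar{n}_2$ is a unit there. The summand is invariant under this substitution: the exponential becomes $e^{-j\frac{2\pi i}{n_1}m}$ because $k_1\bar{n}_2\equiv j$, while the weight is unchanged since $k_1\equiv jn_2\pmod{n_1}$ forces $\gcd(k_1,n_1)=\gcd(jn_2,n_1)=\gcd(j,n_1)$, again using that $n_2$ is a unit modulo $n_1$. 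Hence the first factor equals $h_m(n_1)$, the identical argument gives $h_m(n_2)$ for the second, and therefore $h_m(n_1n_2)=h_m(n_1)h_m(n_2)$. The only genuinely delicate ingredient is this invariance of the gcd weight under multiplication by a unit; once it is in place, the factorization is forced and the proposition follows.
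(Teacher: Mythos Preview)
Your proof is correct and uses essentially the same CRT-based approach as the paper's sketch. The one difference is direction and parameterization: the paper starts from the product $h_m(u)h_m(v)$ and merges the two sums via the substitution $a=kv+lu$, which by CRT runs over a complete residue system modulo $uv$; since $e^{-m\frac{2\pi i}{u}k}\,e^{-m\frac{2\pi i}{v}l}=e^{-m\frac{2\pi i}{uv}(kv+lu)}$ holds exactly, no inverse factors $\bar n_1,\bar n_2$ ever appear and the twist--untwist step you flagged as the main obstacle is bypassed entirely. Your route is equally valid but slightly longer; both rest on the same two ingredients, the CRT bijection and the invariance of $\gcd(\cdot,n_1)$ under multiplication by a unit modulo $n_1$.
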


\begin{proof}[Sketch      of      proof      of      proposition
\ref{proposition001}]

Assume  two integer  numbers  $u$, and  $v$,  that are  coprime,
i.e.,  $\mygcd(u,v)  =  1$,  as  required  by  the  property  of
multiplicativity, then

\begin{eqnarray}
h_m(u)  h_m(v)
 &  = &
  \sum_{k=1}^u \mygcd(k,u)  e^{-m\frac{2\pi
i}{u}k} \sum_{l=1}^v \mygcd(l,v) e^{-m\frac{2\pi i}{v}l}
 \label{eq900-1}\\
 &  = &
\sum_{k=1}^u \sum_{l=1}^v \mygcd(kv + lu,uv) e^{-m\frac{2\pi i}{uv}(kv + lu)}
 \label{eq900-2}\\
&  = &
\sum_{a=1}^{uv} \mygcd(a,uv) e^{-m\frac{2\pi i}{uv}a}
 \label{eq900-3}\\
&  = &
h_m(uv)
 \label{eq900-4}
\end{eqnarray}

Here equation \ref{eq900-2}  follows from equation \ref{eq900-1}
because of elementary properties of the greatest common divisor:
$\mygcd(k,u) = \mygcd(kv,u) =  \mygcd(kv + lu,u)$, and similarly
$\mygcd(l,v) = \mygcd(kv + lu,  v)$, where the summation indices
$k$,  and $l$  are limited  to  the ranges  in their  respective
summation.  Subsequently, the  multiplicativity of  the greatest
common  divisor itself,  $\mygcd(kv  + lu,  u)  \mygcd(kv +  lu,
v)  =  \mygcd(kv +  lu,  uv)$  was used.  Furthermore,  equation
\ref{eq900-3} follows from equation \ref{eq900-2} because of the
Chinese remainder theorem  which, in this case, says  there is a
unique mapping between the numbers $k$ modulo $u$ and $l$ modulo
$v$, and the number $a$ modulo $uv$.

\end{proof}

It  is clear  that the  reasoning  in the  proof of  proposition
\ref{proposition001}  also holds  if one  replaces the  greatest
common  divisor by  a  multiplicative function  of the  greatest
common   divisor  everywhere   in  equations   \ref{eq900-1}  to
\ref{eq900-3}, i.e., replaces $\mygcd(k,u)$ by $f(\mygcd(k,u))$.

\section{Prime  number representation  of  the discrete  Fourier
transform of the greatest common divisor}

A       fundamental       property       of       multiplicative
functions\cite{Apostol1976}  is  that   the  function  value  is
entirely  expressible  in  function  values  of  powers  of  the
prime-factors of  the argument. The  explicit form for  the case
where this  function is  the discrete  Fourier transform  of the
greatest  common divisor,  i.e., $h_m(n)$,  is given  in theorem
\ref{th1} below. We define the symbol $\mytheta{t}{s} = H[t-s]$,
where  $H[w]$  is  the  Heaviside  step  function  for  integral
arguments  $w$. Hence,  $\mytheta{t}{s} =  0$  if $t  < s$,  and
$\mytheta{t}{s}  = 1$  if $t  \ge s$.  The comma-symbol  between
the  indices  is added  to  avoid  confusion  in case  where  an
index is  made up  of an  expression. We  also use  the function
$\mymin{r}{s}$ indicating the minimum  of the integral arguments
$r$ and $s$.

\

\

\

\

\begin{theorem}[
]\label{th1}

If the  factorization of  the number $n$,  the argument,  in its
prime  factors  $p_j, j  =  1,  \cdots, r$,  is  given  as $n  =
\prod_{j=1}^r p_j  ^{s_j}$, where $s_j$,  with $ s_j \ge  1$, is
the multiplicity of prime factor $p_j$, and the order $m$ of the
primitive root of unity, where $1 \le m \le n$, is written as $m
= u \prod_{i=1}^r p_i ^{t_i}$,  with $t_i \ge 0$, $\mygcd(u,p_i)
= 1$, and  $1 \le i \le r$, then  the discrete Fourier transform
of the greatest common divisor can be expressed in prime factors
as

\begin{equation}\label{eq001}
\sum_{k=1}^n \mygcd(k,n) e^{-k\frac{2\pi  i}{n}m} =
  \prod_{i = 1}^r \left[ (\mymin{t_i}{s_i} + 1) \totphi(p_i^{s_i}) +
 \mytheta{t_i}{s_i} p_i^{s_i - 1} \right]
\end{equation}

\noindent where $\totphi(n)$ is the Euler totient function.

\end{theorem}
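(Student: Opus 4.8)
The plan is to reduce the claim to a single prime power by means of the multiplicativity established in Proposition~\ref{proposition001}, and then to evaluate the prime-power case by recognizing the relevant partial sums as Ramanujan sums. Since the prime powers $p_i^{s_i}$ are pairwise coprime, multiplicativity in the argument (at fixed $m$) gives $h_m(n)=\prod_{i=1}^r h_m(p_i^{s_i})$, the \emph{same} index $m$ entering every factor. It therefore suffices to prove, for a single prime $p$ and with $t=v_p(m)$ the $p$-adic valuation of $m$ (so that $t$ is precisely the exponent $t_i$ in the factorization of $m$, the unit part $u$ being irrelevant because $\mygcd(u,p)=1$), that $h_m(p^s)=(\mymin{t}{s}+1)\,\totphi(p^s)+\mytheta{t}{s}\,p^{s-1}$.

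First I would split the defining sum according to the value $\mygcd(k,p^s)=p^{s-a}$. For $1\le a\le s$, writing each such $k$ as $k=p^{s-a}j$ with $\mygcd(j,p)=1$ turns the inner sum into $\sum_{\mygcd(j,p)=1} e^{-2\pi i m j/p^{a}}=c_{p^{a}}(m)$, the Ramanujan sum, which is real and hence unaffected by the sign of $m$; the single remaining term $k=p^s$ has $e^{-2\pi i m}=1$ and contributes $p^s=p^s c_{1}(m)$. Together these give the compact form $h_m(p^s)=\sum_{a=0}^{s} p^{\,s-a}\,c_{p^{a}}(m)$, in agreement with the Dirichlet convolution representation recalled in the introduction (cf. equation~\ref{eq015}). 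The key input is now the closed value of the Ramanujan sum at a prime power, which depends only on $t$: one has $c_{p^{a}}(m)=\totphi(p^{a})$ for $1\le a\le t$, $c_{p^{t+1}}(m)=-p^{t}$, $c_{p^{a}}(m)=0$ for $a\ge t+2$, and $c_{1}(m)=1$.

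Inserting these values is where the structure of the formula emerges, and is the step I expect to require the most care. Each surviving $\totphi$-term satisfies $p^{\,s-a}\totphi(p^{a})=\totphi(p^s)$, and there are exactly $\mymin{t}{s}$ of them among $a\ge 1$; the $a=0$ term rewrites as $p^s=\totphi(p^s)+p^{s-1}$. The decisive observation is that the lone negative contribution, $p^{\,s-(t+1)}\cdot(-p^{t})=-p^{s-1}$ at $a=t+1$, lies in the summation range precisely when $t<s$, in which case it cancels the $+p^{s-1}$ coming from the $a=0$ term; when $t\ge s$ this index falls outside the range and the $+p^{s-1}$ survives. Collecting terms therefore yields $h_m(p^s)=(\mymin{t}{s}+1)\,\totphi(p^s)+\mytheta{t}{s}\,p^{s-1}$, the Heaviside factor $\mytheta{t}{s}$ recording exactly the surviving $p^{s-1}$. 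Taking the product over $i=1,\dots,r$ then gives the asserted prime factor expression and completes the proof.
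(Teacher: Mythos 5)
Your proposal is correct and takes essentially the same route as the paper's second (``constructive'') proof: reduction to prime powers via multiplicativity, the identity $h_m(p^s)=\sum_{a=0}^{s}p^{\,s-a}c_{p^a}(m)$ (the prime-power case of equation~\ref{eq015}), and the von Sterneck/H\"older evaluation of the Ramanujan sum at prime powers, followed by the same collection of terms using $p^{\,s-a}\totphi(p^a)=\totphi(p^s)$. The only cosmetic differences are that you derive the convolution identity directly by grouping $k$ according to its $p$-adic valuation rather than citing Schramm's general result, and that you handle $t<s$ and $t\ge s$ uniformly through the cancellation of $\pm p^{s-1}$ instead of the paper's explicit case split.
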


The  multiplicity  $s_j$ of  prime  $p_j$  in $n$  is  naturally
larger  than  0,  but  $m$   is  not  necessarily  divisible  by
$p_j$. Therefore,  a multiplicity  $t_i$ can  be equal  to zero,
indicating that prime  factor $p_i$ not is present  in the prime
factorization of  $m$. Also, since  $m$ ranges from $1$  to $n$,
the multiplicity $t_i$ of prime $p_i$  in $m$ can be higher then
the multiplicity $s_i$ of the same prime in $n$.

Two different  proofs will  be given, an  inductive proof  and a
constructive proof. The inductive proof provides more insight in
the  nature  of the  problem.  The  constructive proof  will  be
generalized to  the case  of the  discrete Fourier  transform of
\textit{a function} of the greatest common divisor, further down
in this work. In  both cases it will be used  that $h_m(n)$ is a
multiplicative function, meaning that  $h_m(n) = \prod_{i = 1}^r
h_m(p_i^{s_i})$, implying  that theorem \ref{th1} only  needs to
be proven  for the case where  $n$ equals the power  of a single
prime factor, i.e., $n = p^s$. The index $i$ can then be omitted
because only one factor is considered.

\begin{proof}[Proof 1, inductive proof]

The  two cases  $\mygcd(m,n)  >  1$ and  $\mygcd(m,n)  = 1$  are
considered separately, starting with the case $\mygcd(m,n) > 1$.
Induction with respect to $s$ will be applied.

\begin{case}[$\mygcd(m,n)  >   1$]\label{case001}  
\begin{step}[Base step] When $s = 1$ it follows that $n = p$ and
then also $m  = p$, because that is the  only possible value for
$m$ where  $\mygcd(m,p) > 1$. Then  also $t = 1$,  and the right
hand side of equation \ref{eq001}  becomes $2 \totphi(p) + p^0 =
2p-1$. For the left hand side one obtains

\begin{equation}
\sum_{k=1}^p \mygcd(k,p) e^{-k\frac{2\pi  i}{p}p} = \sum_{k=1}^p \mygcd(k,p)
= (p-1) 1 + p = 2p-1
\end{equation}
showing that equation \ref{eq001} holds for $s = 1$.
\end{step}

\begin{step}[Induction  step] Assume  that equation  \ref{eq001}
holds for $s =  \sz$, then with $n=p^{\sz}$ and $m  = u p^t$ the
induction hypothesis reads

\begin{equation}\label{eq002}
\sum_{k=1}^{p^{\sz}} \mygcd(k,p^{\sz}) e^{-k\frac{2\pi  i}{p^{\sz}}up^t} =
   (\mymin{t}{\sz} + 1) \totphi(p^{\sz}) +
 \mytheta{t}{\sz }p^{\sz - 1} 
\end{equation} 

Now consider  the left  hand side  of equation  \ref{eq002} with
$\sz$ replaced by $\sz+1$, and partition the summation over $k =
1 \cdots p^{\sz+1}$  into two subsets, I and II,  where subset I
consistes of all multiples of $p$ and subset II of the remaining
values. Hence  subset I  $ =  \{ p,  2p, \cdots  p^{\sz}p\}$ and
subset II $\{1, 2, \cdots p-1,  p+1,\cdots 2p-1 , 2p + 1, \cdots
p^{\sz}p - 1\}$. Consider the partial summation over k in subset
I using $k = j p$, this gives

\begin{eqnarray}
\sum_{\substack{k=1\\k\in I}}^{p^{\sz + 1}} \mygcd(k,p^{\sz+1})
  e^{-k\frac{2\pi  i}{p^{\sz+1}}up^t} & = & 
\sum_{j=1}^{p^{\sz}} \mygcd(jp,p^{\sz+1}) e^{-jp\frac{2\pi  i}{p^{\sz+1}}up^t}\label{eq004-1} \\
 & = & p \sum_{j=1}^{p^{\sz}} \mygcd(j,p^{\sz}) e^{-j\frac{2\pi  i}{p^{\sz}}up^t} \label{eq004-2}\\
 & = & p \left[(\mymin{t}{\sz} + 1) \totphi(p^{\sz}) + \mytheta{t}{\sz}p^{\sz-1}\right]\label{eq004-3}\\
 & = & (\mymin{t}{\sz} + 1) \totphi(p^{\sz + 1}) + \mytheta{t}{\sz}p^{\sz}\label{eq004-4}
\end{eqnarray}

where  in  the  step  to  equation  \ref{eq004-2}  it  has  been
used  that  $\mygcd(jp,p^{\sz+1})=   p  \mygcd(j,p^{\sz}),  j  =
1  \cdots  p^{\sz}$,  and  equation  \ref{eq004-3}  is  obtained
with  the induction  hypothesis. Equation  \ref{eq004-4} follows
by  using  that  the  totient function  obeys  $p\totphi(p^s)  =
\totphi(p^{s+1})$ for $s\ge 1$.

Next consider  the partial  summation over  subset II,  which is
such  that $\gcd(k,p^{\sz+1})=1$.  Upon rewriting  the summation
over subset II  as the difference of the  complete summation and
the summation over subset I, one obtains

\begin{eqnarray}
\sum_{\substack{k=1\\k\in II}}^{p^{\sz + 1}} \mygcd(k,p^{\sz+1})
  e^{-k\frac{2\pi  i}{p^{\sz+1}}up^t}
 & = & 
\sum_{\substack{k=1\\k\in II}}^{p^{\sz + 1}} 
  e^{-k\frac{2\pi  i}{p^{\sz+1}}up^t} \label{eq006-1}\\
 & = & 
\sum_{k=1}^{p^{\sz + 1}} 
  e^{-k\frac{2\pi  i}{p^{\sz+1}}up^t}
 - \sum_{k\in I} 
  e^{-k\frac{2\pi  i}{p^{\sz+1}}up^t}\label{eq006-2}\\
 & = & 
p^{\sz+1}\mytheta{t}{\sz+1} - \mytheta{t}{\sz} p^{\sz}\label{eq006-3}
\end{eqnarray}

where it  has been used  that the  first summation on  the right
hand  side  of equation  \ref{eq006-2}  equals  zero when  $t  <
\sz+1$, by summing the geometric  series. On the other hand, the
summand equals $1$ if $t  \ge \sz+1$, resulting in the summation
being equal to the number of terms, in this case. The second sum
also  equals zero,  except when  $t \ge  \sz$. Here  the summand
equals $1$ if $t \ge  \sz$, because the summation index contains
a factor $p$, and the summation  results in the number of terms,
$p^{\sz}$, in this case.

Collecting terms from equations \ref{eq004-4} and \ref{eq006-3},
and cancelling the term $\mytheta{t}{\sz} p^{\sz }$, gives

\begin{eqnarray}\label{eq007}
&&\sum_{k=1}^{p^{\sz + 1}} \mygcd(k,p^{\sz + 1}) e^{-k\frac{2\pi  i}{p^{\sz+1}}up^t} \nonumber\\
&=&
 (\mymin{t}{\sz} + 1)\totphi(p^{\sz + 1}) +  p^{\sz+1}\mytheta{t}{\sz+1} \label{eq007-1}\\ 
&=&
 (\mymin{t}{\sz + 1} + 1)\totphi(p^{\sz + 1}) -\mytheta{t}{\sz+1}(p^{\sz+1}- p^{\sz}) +
 p^{\sz+1}\mytheta{t}{\sz+1} \label{eq007-2}\\ 
&=&
 (\mymin{t}{\sz + 1} + 1)\totphi(p^{\sz + 1}) +\mytheta{t}{\sz+1} p^{\sz} \label{eq007-3} 
\end{eqnarray}

In  the step  from equation  \ref{eq007-1} to  \ref{eq007-2} the
property $\mymin{t}{s}  = \mymin{t}{s+1} -  \mytheta{t}{s+1}$ of
the minimum-function has been used, as well as the specific form
$  \totphi(p^a) =  p^a -  p^{a -  1}$ of  the totient  function.
Equation  \ref{eq007-3}  completes  the  induction  step,  which
started with equation \ref{eq002}.

\end{step}

\end{case}

\begin{case}[$\mygcd(m,n) = 1$]\label{case002}

Since  $n=p^s$, with  $p$  a  prime number,  and  $s  > 0$,  the
requirement  $\mygcd(m,  p^s)   =  1$  implies  that   $t  =  0$
in  equation  \ref{eq001},  which  implies that  the  term  with
$\mytheta{t}{s}$ always  equals 0.  Also $\mymin{0}{s} =  0$ for
all $s > 0$. Hence it has to be proven that

\begin{equation}\label{eq010}
\sum_{k=1}^{p^s} \mygcd(k,p^s) e^{-k\frac{2\pi  i}{p^s}m} =
   \totphi(p^{s}) 
\end{equation}

which is done in the following two steps.


\begin{step}[Base step,  s =  1] Direct  evaluation of  the left
hand side of equation \ref{eq010}, with $s = 1$, gives

\begin{eqnarray}\label{eq011}
\sum_{k=1}^p \mygcd(k,p) e^{-k\frac{2\pi  i}{p}m}
& = &
\sum_{k=1}^p e^{-k\frac{2\pi  i}{p}m} - \sum_{k=p}^p e^{-k\frac{2\pi  i}{p}m}
+\sum_{k=p}^p  p e^{-k\frac{2\pi  i}{p}m} \\
& = &
0 - 1 + p \\
& = &
\totphi(p)
\end{eqnarray}

which proves the base step.

\end{step} 

\begin{step}[Induction step]

The equation to be proven, equation \ref{eq010}, is identical to
equation \ref{eq002} with $t = 0$ and $\mytheta{t}{\sz}=0$, thus
we  can reuse  the  strategy used  there.  Assume that  equation
\ref{eq010}  holds for  $s =  \sz$, and  consider the  left hand
side  of equation  \ref{eq010} for  $s =  \sz +  1$. Upon  again
partitioning the summation  into the same two subsets  I and II,
as above,  and repeating  steps \ref{eq004-1}  to \ref{eq006-3},
where now all terms containing $\theta$-symbols or min-functions
vanish, one obtains

\begin{equation}\label{eq012}
\sum_{k=1}^{p^{\sz+1}} \mygcd(k,p^{\sz+1}) e^{-k\frac{2\pi  i}{p^{\sz+1}}m} =
   \totphi(p^{\sz+1}) 
\end{equation}

which completes the induction step.

\end{step} 


\end{case}

All cases being considered this finalizes the proof. 

\end{proof}

\begin{proof}[Proof 2, constructive proof]

The  proof   starts  from  a  result   \cite{Schramm2008},  that
expresses the  discrete Fourier transform  of a function  of the
greatest  common divisor  in the  Dirichlet convolution  of that
function and the Ramanujan sum \cite{Ramanujan1918}.

The Dirichlet  convolution, denoted $f  * g$, of  two arithmetic
functions, $f$ and $g$, is defined as

\begin{equation}\label{eq013}
f * g (n) = \sum_{d|n} f(\frac{n}{d}) g(d)
\end{equation}

and the Ramanujan sum as

\begin{equation}\label{eq014}
c_n(m) = \sum_{\substack{k=1\\\mygcd(k,n)=1}}^n e^{k\frac{2\pi  i}{n}m}.
\end{equation}

For  notational  reasons  we  use  $r_m(n)  =  c_n(m)$  for  the
Ramanujan  sum  if it  is  used  in combination  with  Dirichlet
convolution, but also  use the more common  notation $c_n(m)$ in
other cases. The result referred to above \cite{Schramm2008} can
now be written as

\begin{equation}\label{eq015}
\sum_{k=1}^n f(\mygcd(k,n)) e^{-k\frac{2\pi  i}{n}m} =  f * r_m (n)
\end{equation}

The  case $f(n)  = \id(n)$, reading 

\begin{equation}\label{eq017}
\sum_{k=1}^n \mygcd(k,n) e^{-k\frac{2\pi  i}{n}m} = \id * r_m (n),
\end{equation}

is what is needed here. Using von Sterneck's arithmetic function
\cite{vonSterneck1902},  the   Ramanujan  sum  can   be  written
as\cite{HardyWright1938}

\begin{equation}\label{eq018}
r_m(n) = \mu(\frac{n}{\mygcd(m, n)})\frac{\totphi(n)}{\totphi(\frac{n}{\mygcd(m,n)})},
\end{equation}

where $\mu(n)$ is the  Möbius function. With this representation
of the Ramanujan sum equation \ref{eq017} becomes

\begin{eqnarray}\label{eq019}
\sum_{k=1}^n \mygcd(k,n) e^{-k\frac{2\pi  i}{n}m}
& = & 
\sum_{d|n} \frac{n}{d} \mu(\frac{d}{\mygcd(m, d)})\frac{\totphi(d)}{\totphi(\frac{d}{\mygcd(m,d)})}.
\end{eqnarray}

As before, only the case where $n$  is a power of a prime number
$p$ needs  to be considered.  In that  case the divisors  of $n$
also  are powers  of $p$.  Hence, take  $n=p^s$, and  $d =  p^b,
b  =  0,\cdots,  s$.  Again two  cases  are  considered,  namely
$\mygcd(m,p^s) > 1$ and $\mygcd(m,p^s) = 1$


\begin{case}[$\mygcd(m,p^s) =  1$] Then for all  divisors $d$ of
$n$ it  holds that $\mygcd(m,d)  = 1$.  In the summation  on the
right  hand  side  of  equation \ref{eq019}  the  only  non-zero
contributions arise from the terms where $d = 1$, i.e., $b = 0$,
and $d  = p$, i.e.,  $b =  1$. For all  other values of  $b$ the
Möbius  function  gives zero.  Thus  one  obtains from  equation
\ref{eq019}, with $n = p^s$

\begin{eqnarray}\label{eq024}
\sum_{k=1}^n \mygcd(k,n) e^{-k\frac{2\pi  i}{n}m}
& = & 
p^s\frac{\totphi(1)}{\totphi(1)} + \frac{p^s}{p}(-1)\frac{\totphi(p)}{\totphi(p)}
\label{eq024-2}\\
& = & 
p^s - p^{s-1}\label{eq024-3}\\
& = & 
\totphi(p^s)\label{eq024-4}
\end{eqnarray}

which proves equation \ref{eq001}  of theorem \ref{th1} for this
case.

\end{case}

\begin{case}[$\mygcd(m,p^s) > 1$]

This means  that $m$ contains a  power of $p$, i.e.,  $m=u p^t$,
$t  \ge 1$,  and  $\mygcd(u,p) =  1$.  Equation \ref{eq019}  can
subsequently be written as

\begin{eqnarray}\label{eq020}
\sum_{k=1}^n \mygcd(k,n) e^{-k\frac{2\pi  i}{n}up^t}
& = & 
\sum_{b=0}^s \frac{p^s}{p^b} \mu(\frac{p^b}{\mygcd(up^t, p^b)})\frac{\totphi(p^b)}{\totphi(\frac{p^b}{\mygcd(up^t,p^b)})}.
\end{eqnarray}

The  totient function  has the  property $\totphi(p^b)  = p^b  -
p^{b-1},  b  \ge  1$,  whereas  $\totphi(p^0)  =  1$.  Therefore
the  term  $b=0$ will be  taken  out  of  the summation  in  equation
\ref{eq020}, the  value of this term  equals $p^s$, irrespective
of $t$.

Now  partition the  sum over  $b$ into  two subranges,  $b =  0,
\cdots, t$ and $b = t  +1, \cdots, s$, where the second subrange
is absent if $t \ge s$. Furthermore, use that $\mygcd(up^t, p^b)
= p^{\mymin{t}{b}}$, and that


\begin{equation}\label{eq022}
\mu(\frac{p^b}{\mygcd(up^t, p^b)}) = 
 \left\{
\begin{array}{lll}
\mu(1) & = & \quad\quad 1 \quad  b \le t  \\ 
\mu(p^{b-t}) & = &
\left\{
\begin{array}{rl}
-1 & b = t + 1\\
0 & b > t + 1
\end{array}
\right.
\end{array}
\right.
\end{equation}

then equation \ref{eq020} becomes

\begin{eqnarray}
&&\sum_{k=1}^n \mygcd(k,n) e^{-k\frac{2\pi  i}{n}m}  \nonumber\\ 
&=& p^s + \sum_{b=1}^t \frac{p^s}{p^b}\frac{\totphi(p^b)}{1} + \sum_{b=t+1}^s \frac{p^s}{p^b}
\mu(p^{b-t}) \frac{\totphi(p^b)}{\totphi(\frac{p^b}{\mygcd(up^t,p^b)})} \label{eq023-1} \\ 
&=&p^s + \sum_{b=1}^t \frac{p^s}{p^b}(p^b - p^{b-1}) - 
\frac{p^s}{p^{t+1}}
\frac{\totphi(p^{t+1})}{\totphi(\frac{p^{t+1}}{up^t})}(1-\mytheta{t}{s}) \label{eq023-2}  \\ 
&=&(\mymin{t}{s} + 1) \totphi(p^s) + \mytheta{t}{s} p^{s-1}\label{eq023-3}
\end{eqnarray}

Because $\mygcd(m,  p^s) >  1$ the  first summation  in equation
\ref{eq023-1} contains  at least one term,  corresponding to the
case $t=1$. In the second summation only  the term with $b = t +
1$  remains,  again because  of  the  properties of  the  Möbius
function, except in the case where  $t \ge s$, when this term is
not present  at all, resulting in  the factor $1-\mytheta{t}{s}$
in equation  \ref{eq023-2}. After cancelling a  factor $p^{b-1}$
in the  remaining summation on  the right hand side  of equation
\ref{eq023-2} the summand no longer  depends on $b$ resulting in
a factor  $\mymin{t}{s}$. The  $\min$ function results  from the
fact that the summation over $b$ runs up to $t$ except when $s <
t$, in which case it runs up to $b=s$. Equation \ref{eq023-3} is
the desired result.

\end{case}

\end{proof}

\section{Discussion}

In this section a number  of interesting consequences of theorem
\ref{th1} are  discussed, some  of which are  generalizations of
known results.

\begin{corollary}\label{corollary001}  If  $\mygcd(m,n)=1$  then
the discrete  Fourier transform of the  greatest common divisor,
as  given  in equation  \ref{eq000-1},  is  equal to  the  Euler
totient  function, i.e.,  if $\mygcd(m,n)  = 1$  then $h_m(n)  =
\totphi(n).$

\end{corollary}

\begin{proof}

From theorem  \ref{th1} it  follows that  if $\mygcd(m,n)  = 1$,
then  $t_i =  0$, implying  that $\mytheta{s_i}{t_i}  = 0$,  and
$\mymin{t_i}{s_i}  =  0$, $i  =  1,\cdots,  r$. Because  of  the
multiplicativity of the totient function one obtains

\begin{eqnarray}\label{eq025}
\sum_{k=1}^n \mygcd(k,n) e^{-k\frac{2\pi  i}{n}m}
& = &
  \prod_{i = 1}^r  \totphi(p_i^{s_i})\\
& = &
\totphi(n).
\end{eqnarray}

\end{proof}

The  result of  corollary \ref{corollary001}  was known  for the
special case $m = 1$, see \cite{Schramm2008}.

\begin{corollary}\label{corollary002}

If $\mygcd(m,n) = 1$ then the Ramanujan sum $r_m(n) = \mu(n)$

\end{corollary}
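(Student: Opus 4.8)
The plan is to offer two routes and foreground the one that stays inside the machinery already developed. The quickest route is a direct substitution into the von Sterneck representation of equation \ref{eq018}. When $\mygcd(m,n) = 1$ the quotient $n/\mygcd(m,n)$ collapses to $n$, so the argument of the M\"obius function becomes $n$ and the totient ratio becomes $\totphi(n)/\totphi(n) = 1$; equation \ref{eq018} then reads $r_m(n) = \mu(n)$ immediately. This is essentially a one-line verification, but it leans on the von Sterneck formula rather than on Theorem \ref{th1}, so it feels out of place among ``consequences of theorem \ref{th1}''.

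A derivation more in keeping with the discussion section proceeds from Corollary \ref{corollary001} together with the convolution identity \ref{eq017}. First I would note that if $\mygcd(m,n) = 1$ and $d \mid n$, then $\mygcd(m,d)$ divides both $m$ and $n$, hence divides $\mygcd(m,n) = 1$, so every divisor of $n$ is again coprime to $m$. Corollary \ref{corollary001} therefore applies to each such $d$ and gives $\id * r_m(d) = \totphi(d)$. Next I would invoke the classical identity $\totphi = \id * \mu$ (M\"obius inversion of $\sum_{d\mid n}\totphi(d) = n$, which in the convention of equation \ref{eq013} reads $\totphi(n) = \sum_{d\mid n}\tfrac{n}{d}\mu(d)$), turning the previous relation into $\id * r_m(d) = \id * \mu(d)$ for every $d \mid n$.

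Finally, I would extract $r_m(n) = \mu(n)$ from this equality of convolutions by induction over the divisors of $n$ ordered by divisibility. The base case is $r_m(1) = c_1(m) = 1 = \mu(1)$. For the step, isolating the top term $d = n$ of $\sum_{d \mid n}\tfrac{n}{d}r_m(d) = \sum_{d\mid n}\tfrac{n}{d}\mu(d)$ and using the inductive hypothesis $r_m(d) = \mu(d)$ on the proper divisors cancels all lower terms and leaves $r_m(n) = \mu(n)$. I expect the only point requiring care to be precisely this extraction: the hypothesis $\mygcd(m,n)=1$ is \emph{local} to $n$, so although $\id$ is invertible under Dirichlet convolution (as $\id(1)=1\neq 0$), one cannot cancel it globally over all arithmetic functions; the argument must instead be confined to the divisor lattice of $n$, where, as observed above, coprimality is inherited by every divisor and the inductive cancellation goes through cleanly.
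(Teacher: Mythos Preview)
Your second route is precisely the paper's argument: the paper combines Corollary~\ref{corollary001} with equation~\ref{eq017} to obtain $\id * r_m(n) = \totphi(n)$, and then invokes $\id * \mu(n) = \totphi(n)$ to conclude. The paper, however, leaves the extraction of $r_m(n)=\mu(n)$ from $\id*r_m(n)=\id*\mu(n)$ entirely implicit; your observation that coprimality is inherited by every divisor of $n$, together with the divisor-lattice induction, supplies exactly the justification the paper omits, and your caution that one cannot simply cancel $\id$ globally is well placed.

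Your first route via the von Sterneck representation~\ref{eq018} is a genuine alternative: it bypasses Theorem~\ref{th1} and Corollary~\ref{corollary001} altogether and is a one-line check. The paper deliberately avoids this because the surrounding discussion is framed as consequences of Theorem~\ref{th1}; your instinct that the von Sterneck shortcut is ``out of place'' there is correct, though as a standalone proof it is of course the most economical.
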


\begin{proof}

From corollary \ref{corollary001} and equation \ref{eq017} it follows that
if $\mygcd(m,n) = 1$ then 

\begin{eqnarray}\label{eq026}
id * r_m(n) & = & \totphi(n)
\end{eqnarray}

which  combined   with  the  well   known  result  that   $id  *
\mu(n)  = \totphi(n)$ proves corollary \ref{corollary002}.

\end{proof}

Corollaries \ref{corollary001} and \ref{corollary002} above both
apply to  the case where  $\mygcd(m,n) =  1$, in which  case the
function  $h_m(n)$ becomes  entirely  independent of  $m$. As  a
matter of fact,  also for general $m$ does  $h_m(n)$ only depend
on prime factors $p_i$ of $n$. The only specific property of $m$
that affects  the result is  the multiplicity $t_i$ of  $p_i$ in
$m$, as can be seen from theorem~\ref{th1}.

\section{Generalizations}

It is  possible to generalize  theorem \ref{th1} to the  case of
the  discrete Fourier  transform  \textit{of a  function of  the
greatest common  divisor}. Hence we define  the discrete Fourier
transform  of  a function  of  the  greatest common  divisor  as
follows.  Given an  arithmetic function  $f(n) :  \mathbb{N} \to
\mathbb{C}$,  the \textit{discrete  Fourier  transform} of  this
function of the \textit{greatest common divisor} with respect to
the $m$-th order of the $n$-th root of unity, $1 \le m\le n$, is
given by

\begin{equation}\label{eq000-0}
\hat{f}_m(n) = \sum_{k=1}^n f(\mygcd(k,n)) e^{-k\frac{2\pi  i}{n}m}
\end{equation}

As already made  clear in the introduction,  $\hat{f}_m(n)$ is a
multiplicative function for fixed $m$ if $f$ is a multiplicative
function. Moreover, if $f$ is integer valued then $\hat{f}_m(n)$
is integer  valued, which also  follows from~\cite{Schramm2008}.
Taking  the  the identity  function  $id(n)  := n$  for  $f(n)$,
equation \ref{eq000-0}  gives the discrete Fourier  transform of
the $\mygcd$, $h_m(n)$.

  A closed form expression  in the case
where $f$  is \textit{completely multiplicative} will  be proven
below.  The  following  lemma  covers  the  general  case  of  a
multiplicative function of the greatest common divisor, in which
case the result still contains a summation. When specializing to
a completely multiplicative function this summation can be done.

\begin{lemma}\label{lemma001}

If  the factorization  of the  number $n$  in its  prime factors
$p_j, j  = 1,  \cdots, r$,  is given as  $n =  \prod_{j=1}^r p_j
^{s_j}$, where $s_j$,  with $s_j \ge 1$, is  the multiplicity of
prime factor $p_j$,  and the number $m$, where $1  \le m \le n$,
is written  as $m = u  \prod_{i=1}^r p_i ^{t_i}$, with  $t_i \ge
0$, $\mygcd(u,p_i) = 1$, and $1  \le i \le r$, then the discrete
Fourier  transform  of  a  multiplicative function  $f$  of  the
greatest common divisor is given by

\begin{eqnarray}\label{eq1001}
&&\sum_{k=1}^n f(\mygcd(k,n)) e^{-k\frac{2\pi  i}{n}m}
 = \nonumber\\
&& \ \prod_{i = 1}^r \left[ 
f(p_i^{s_i}) + (p_i-1) \sum_{b=1}^{\mymin{t_i}{s_i}} p_i^{b-1}f(p_i^{s_i-b}) -
 f(p_i^{s_i-t_i-1})p_i^{t_i}(1-\mytheta{t_i}{s_i})
  \right]
\end{eqnarray}

\end{lemma}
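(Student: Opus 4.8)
The plan is to follow the constructive proof (Proof~2) of Theorem~\ref{th1} essentially verbatim, with the identity function $\id$ replaced by the general multiplicative $f$. First I would reduce to a single prime power: since $f$ is multiplicative, so is $\hat f_m$ in the argument for fixed $m$, and hence it suffices to establish \eqref{eq1001} for $n = p^s$, dropping the index $i$. For such $n$ the divisors are exactly $p^0,\dots,p^s$, so the Dirichlet-convolution representation \eqref{eq015} together with the definition \eqref{eq013} yields
\[
\sum_{k=1}^{p^s} f(\mygcd(k,p^s))\, e^{-k\frac{2\pi i}{p^s}m} = f * r_m(p^s) = \sum_{b=0}^{s} f(p^{s-b})\, r_m(p^b),
\]
which is the same starting point used in Proof~2, now carrying $f(p^{s-b})$ in place of $p^{s-b}$.

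Next I would insert von Sterneck's form \eqref{eq018} for each $r_m(p^b)$, using $\mygcd(up^t,p^b) = p^{\mymin{t}{b}}$. The M\"obius factor $\mu(p^{\,b-\mymin{t}{b}})$ then selects the surviving terms exactly as before: for $b \le t$ one has $\mymin{t}{b}=b$ and $r_m(p^b)=\totphi(p^b)$; for $b=t+1$ one gets $\mu(p)=-1$ and $r_m(p^{t+1}) = -\totphi(p^{t+1})/\totphi(p) = -p^t$; and for $b>t+1$ the term vanishes. Intersecting $b\le t$ with $\{0,\dots,s\}$ gives the range $\{0,\dots,\mymin{t}{s}\}$, while the lone $b=t+1$ term occurs precisely when $t<s$, i.e.\ when $\mytheta{t}{s}=0$; this is exactly what generates the factor $(1-\mytheta{t}{s})$ in the last term of \eqref{eq1001}.

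Finally I would collect and simplify. Peeling off the $b=0$ contribution $f(p^s)\,r_m(1)=f(p^s)$ — necessary because $\totphi(p^0)=1$ does not fit the generic identity $\totphi(p^b)=p^{b-1}(p-1)$ — the remaining terms with $1\le b\le\mymin{t}{s}$ combine into $(p-1)\sum_{b=1}^{\mymin{t}{s}} p^{b-1} f(p^{s-b})$, and the $b=t+1$ term contributes $-f(p^{s-t-1})\,p^t$ when $t<s$ (where $s-t-1\ge 0$, so the argument is legitimate). Reassembling reproduces the single-prime bracket in \eqref{eq1001}, and multiplicativity restores the product over $i$. As a sanity check, putting $f=\id$ collapses the bracket to $(\mymin{t}{s}+1)\totphi(p^s)+\mytheta{t}{s}p^{s-1}$, recovering Theorem~\ref{th1}. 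The computation is otherwise mechanical; the only genuine care lies in the boundary bookkeeping just described — separating the $b=0$ term and encoding ``the $b=t+1$ term is present iff $t<s$'' through the Heaviside factor $(1-\mytheta{t}{s})$ — so that is where I expect the main obstacle, such as it is.
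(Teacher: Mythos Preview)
Your proposal is correct and follows essentially the same approach as the paper: the paper's proof of the lemma simply says to repeat the constructive proof of Theorem~\ref{th1} via \eqref{eq015} and von Sterneck's formula \eqref{eq018} with $\id$ replaced by $f$, which is exactly what you do (indeed, you spell out the bookkeeping in more detail than the paper does, and you handle the cases $t=0$ and $t\ge 1$ uniformly rather than splitting them as the paper's sketch suggests).
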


\begin{proof}  The proof  is  similar to  the  proof of  theorem
\ref{th1}, we  repeat the  constructive proof based  on equation
\ref{eq015}  and von  Sterneck's formula,  equation \ref{eq018},
where now the identity function $\id(n)$ is replaced by $f(n)$,

\begin{eqnarray}\label{eq1002}
\sum_{k=1}^n \mygcd(k,n) e^{-k\frac{2\pi  i}{n}m}
& = & 
\sum_{d|n} f(\frac{n}{d}) \mu(\frac{d}{\mygcd(m, d)})\frac{\totphi(d)}{\totphi(\frac{d}{\mygcd(m,d)})}.
\end{eqnarray}

Steps   similar   to   those  from   equation   \ref{eq020}   to
\ref{eq023-3} for the  case $\mygcd(m,n) > 1$  and from equation
\ref{eq024} to equation \ref{eq024-3}  for the case $\mygcd(m,n)
= 1$ produce the result of lemma \ref{lemma001}.

\end{proof}

The  result  of  theorem   \ref{th1}  is  recovered  from  lemma
\ref{lemma001} by taking  $f = id$. In that case  the summand of
the summation  in equation  \ref{eq1001} becomes  independent of
the  summation  index which  means  that  the summation  results
in  a factor  $\mymin{t}{s}$.  After  regrouping terms  equation
\ref{eq001} of theorem \ref{th1} is obtained.

In  order  to  make  further  progress  it  is  needed  to  make
assumptions about the function $f$. We will consider the natural
case  of  specializing  to a  \textit{completely  multiplicative
function}. In  this case  the summation in  lemma \ref{lemma001}
can be explicitly performed.

\begin{theorem}\label{th2}  The  discrete Fourier  transform  of
a  completely  multiplicative function  $f$,  not  equal to  the
identity function $id$, of the greatest common divisor is, under
the conditions of lemma \ref{lemma001}, given by

\begin{eqnarray}\label{eq1003}
&&\sum_{k=1}^n f(\mygcd(k,n)) e^{-k\frac{2\pi  i}{n}m}
 = 
\prod_{i = 1}^r \bigg[ 
f(p_i^{s_i})  - f(p_i^{s_i-t_i-1})p_i^{t_i}(1-\mytheta{t_i}{s_i})   \nonumber\\ 
 &&  + (p_i-1) f(p_i^{s_i-1}) \frac{f(p^{\mymin{t_i}{s_i}})-p_i^{\mymin{t_i}{s_i}} }{f(p^{\mymin{t_i}{s_i}})-p_if(p_i^{\mymin{t_i}{s_i}-1})}(1 - \delta_{0t_i})
  \bigg]
\end{eqnarray}

\end{theorem}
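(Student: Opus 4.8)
The plan is to start from Lemma~\ref{lemma001}, which already expresses $\hat f_m(n)$ as a product over primes $p_i$ of a bracketed term containing the summation $\sum_{b=1}^{\min(t_i,s_i)} p_i^{b-1} f(p_i^{s_i-b})$. Since the completely multiplicative hypothesis on $f$ lets us write $f(p_i^{s_i-b}) = f(p_i^{s_i-1}) f(p_i)^{-(b-1)} = f(p_i^{s_i-1}) / f(p_i)^{b-1}$ (valid when $f(p_i)\neq 0$), the summand becomes $f(p_i^{s_i-1}) \bigl(p_i / f(p_i)\bigr)^{b-1}$, which is geometric in $b$. So the whole exercise reduces, prime by prime, to summing a finite geometric series.

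First I would fix a single prime $p = p_i$ and drop the subscript, and factor out $f(p^{s-1})$ from the inner sum, giving $(p-1) f(p^{s-1}) \sum_{b=1}^{\min(t,s)} \bigl(p/f(p)\bigr)^{b-1}$. Then I would apply the standard closed form $\sum_{b=0}^{N-1} x^b = (x^N-1)/(x-1)$ with $x = p/f(p)$ and $N = \min(t,s)$. After clearing the $f(p)$ powers out of numerator and denominator — multiply top and bottom by $f(p)^{\min(t,s)}$ and use complete multiplicativity to recombine $f(p)^{\min(t,s)} = f(p^{\min(t,s)})$ and $f(p)\cdot f(p)^{\min(t,s)-1} = f(p^{\min(t,s)})$ etc. — the ratio should collapse to exactly $\bigl(f(p^{\min(t,s)}) - p^{\min(t,s)}\bigr)/\bigl(f(p^{\min(t,s)}) - p\,f(p^{\min(t,s)-1})\bigr)$, matching the middle factor in equation~\ref{eq1003}. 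The leading term $f(p^{s})$ and the subtracted term $f(p^{s-t-1}) p^{t}(1-\theta_{t,s})$ carry over unchanged from Lemma~\ref{lemma001}, so no work is needed there.

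The two genuinely delicate points are the degenerate cases of the geometric sum, and these are exactly what the two extra indicator factors in the theorem handle. When $t_i = 0$ the sum $\sum_{b=1}^{0}$ is empty and contributes nothing; this is encoded by the factor $(1-\delta_{0t_i})$, so I would verify that the closed-form ratio is multiplied by this Kronecker delta rather than being left to produce a spurious value at $\min(t,s)=0$. The second subtlety is the case $x = p/f(p) = 1$, i.e.\ $f(p) = p$ for that prime; there the formula $(x^N-1)/(x-1)$ is $0/0$ and the sum is simply $N = \min(t,s)$. I expect this to be the main obstacle: one must check that the stated ratio $\bigl(f(p^{\min})-p^{\min}\bigr)/\bigl(f(p^{\min})-p f(p^{\min-1})\bigr)$ also degenerates to $0/0$ precisely when $f(p)=p$, and argue (by continuity in $f(p)$, or by l'Hôpital / direct limit $x\to 1$) that it takes the correct limiting value $\min(t,s)$ there. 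The hypothesis that $f$ is \emph{not} the identity keeps this from being the generic situation, but since $f(p)=p$ can still happen at individual primes for a non-identity completely multiplicative $f$, I would treat the $f(p)=p$ prime-factor as a separate case and confirm consistency, after which collecting the per-prime factors back into the product over $i$ yields equation~\ref{eq1003}.
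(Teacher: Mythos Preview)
Your proposal is correct and follows exactly the paper's own route: start from Lemma~\ref{lemma001}, use complete multiplicativity to turn $\sum_{b=1}^{\min(t_i,s_i)} p_i^{b-1} f(p_i^{s_i-b})$ into a finite geometric series, sum it, and record the $(1-\delta_{0t_i})$ factor for the empty-sum case $t_i=0$. You are in fact more careful than the paper about the degenerate case $f(p_i)=p_i$ at an individual prime (which the global hypothesis $f\neq\id$ does not exclude); the paper's proof simply asserts the geometric sum can be performed and does not address that $0/0$ situation.
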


\begin{proof} If  the function $f$ is  completely multiplicative
then $f(n^s) = f^s(n)$, and the series in equation \ref{eq1001},
lemma \ref{lemma001},  becomes geometric.  As a  consequence the
summation  can be  performed leading  to the  given closed  form
expression. The absence  of the summed term for the  case $t_i =
0$  has been  made  explicit with  factor  $1 -  \delta_{0t_i}$.
\end{proof}

If the function $f$ equals the identity function the closed form
formula  for a  geometric  series diverges.  However the  series
can  still be  summed  as  proven in  theorem  \ref{th1}. 

The sum function of an  arithmetic function $t(n)$ is defined as
the Dirichlet product  of $t(n)$ with the  constant one function
$1(n) := 1$ as

\begin{equation} 
S^t(n) = (1 * t) (n) \label{eq1026}
\end{equation}  

If  the function  $f$ is  written as  the sum  function of  some
function $t$, $f  = 1 * t$, then the  following corollary can be
formulated.

\begin{corollary}\label{corollary1002}

If  the function  $f(n)$ is  the  sum function  of the  function
$t(n)$, i.e., $f(n)  = 1 * t(n)$, then, for  those values of $m$
that  are such  that  $\mygcd(m,n) =  1$,  the discrete  Fourier
transform of  the sum  function of  the greatest  common divisor
gives back the function $t$, i.e., $\hat{f}_m(n) = t(n)$.

\end{corollary}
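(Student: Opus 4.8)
The plan is to reduce the statement to Möbius inversion by routing everything through the convolution identity already established in this paper. First I would invoke equation \ref{eq015}, which expresses the discrete Fourier transform of a function of the greatest common divisor as a Dirichlet convolution with the Ramanujan sum. Applied to the present $f$, this reads $\hat{f}_m(n) = (f * r_m)(n) = \sum_{d \mid n} f(n/d)\, r_m(d)$, using the definition of the Dirichlet convolution in equation \ref{eq013}. The idea is then to evaluate the factor $r_m(d)$ explicitly under the hypothesis $\mygcd(m,n)=1$ and to recognise the resulting convolution.

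The key observation is that corollary \ref{corollary002} applies not only at the argument $n$ but at every divisor occurring in this sum. Indeed, if $\mygcd(m,n)=1$ and $d \mid n$, then $\mygcd(m,d)=1$ as well, so corollary \ref{corollary002} gives $r_m(d) = \mu(d)$ for every $d \mid n$. Substituting this into the convolution removes the dependence on $m$ entirely and yields $\hat{f}_m(n) = \sum_{d \mid n} f(n/d)\, \mu(d) = (f * \mu)(n)$, where $\mu$ is the Möbius function.

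Finally I would substitute the hypothesis $f = 1 * t$ and use the associativity and commutativity of Dirichlet convolution together with the Möbius inversion identity $1 * \mu = \epsilon$, where $\epsilon$ denotes the Dirichlet identity ($\epsilon(1)=1$ and $\epsilon(n)=0$ for $n>1$). This gives $\hat{f}_m(n) = \big((1*t)*\mu\big)(n) = \big(t*(1*\mu)\big)(n) = (t*\epsilon)(n) = t(n)$, which is the claimed identity.

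The only step that warrants care — and the one I would regard as the crux — is the passage from $r_m$ to $\mu$ throughout the whole convolution: corollary \ref{corollary002} is stated at the single argument $n$, but it is its application at all divisors $d \mid n$ that makes the argument go through, and this is licensed precisely by the elementary implication $\mygcd(m,n)=1 \Rightarrow \mygcd(m,d)=1$ for $d \mid n$. Once that is granted, everything else is routine manipulation of Dirichlet convolutions and a single appeal to Möbius inversion.
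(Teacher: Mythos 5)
Your proof is correct and takes essentially the same route as the paper's own: equation \ref{eq015} to write $\hat{f}_m$ as a Dirichlet convolution with $r_m$, corollary \ref{corollary002} to replace $r_m$ by $\mu$, and then the fact that $\mu$ is the Dirichlet inverse of the constant one function to recover $t$. The one point you flag as the crux --- that corollary \ref{corollary002} must be applied at every divisor $d \mid n$, which is licensed by $\mygcd(m,n)=1 \Rightarrow \mygcd(m,d)=1$ --- is passed over silently in the paper's proof, so your version is, if anything, slightly more complete.
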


\begin{proof}

From  equations \ref{eq000-0}  and \ref{eq015}  it follows  that
$\hat{f}_m(n)  =  r_m  *  f(n)$,  which is  what  is  proven  in
\cite{Schramm2008}.  From corollary  \ref{corollary002} it  then
follows that $\hat{f}_m(n) = \mu(n) *  1 * t(n) = t(n) $ because
the Möbius function is the Dirichlet inverse of the constant one
function.

\end{proof}

As  an  interesting  example   of  the  application  of  theorem
\ref{th2} consider the power function

\begin{eqnarray}\label{eq10031}
\id_k(n)  = n^k. 
\end{eqnarray}

This is  a completely multiplicative  function not equal  to the
identity function if $k \ne 1$, hence theorem \ref{th2} applies.
On the other  hand, if $k = 1$, the  function $\id_k$ equals the
identity  function  and theorem  \ref{th1}  applies.  It can  be
verified  that  applying  theorem  \ref{th2} with  $f  =  \id_k$
and  then taking  the limit  $k\rightarrow 1$,  using l'Hopitals
rule, reproduces  the result  of theorem~\ref{th1}. We  will not
reproduce this here.


The  sum-function $S^f$  of  a \textit{multiplicative}  function
$f(n)$ can be expressed in  the prime-factors of the argument as
follows\cite{HardyWright1938}

\begin{eqnarray}\label{eq10002}
S^f(n) = \prod_{i = 1}^r \bigg[1 + f(p_i) + \cdots + f(p_i^{s_i})\bigg]
\end{eqnarray}

With the help of lemma \ref{lemma001} it is possible to obtain a
converse relation, as expressed by the following corollary.

\begin{corollary}\label{corollary1003} Given
$n=\prod_{i=1}^rp_i^{s_i}$, a multiplicative function $f$ can be
expressed in its sum-function $S^f$ as

\begin{eqnarray}\label{eq10003}
f(n) = \prod_{i = 1}^r \bigg[S^f(p_i^{s_i}) - S^f(p_i^{s_i - 1})\bigg]
\end{eqnarray}
\end{corollary}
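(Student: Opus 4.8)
The plan is to derive this inversion formula exactly as the surrounding text anticipates, by combining lemma \ref{lemma001} with corollary \ref{corollary1002}. The conceptual content is simply that $S^f = 1 * f$ is inverted by $\mu$, so that $f = \mu * S^f$; the point of the corollary is to realize this inversion through the Fourier transform. The starting observation is that the sum-function $S^f$ is itself multiplicative, being the Dirichlet convolution of the (completely multiplicative) constant function $1$ with the multiplicative function $f$, so lemma \ref{lemma001} may be applied to it. Moreover, reading corollary \ref{corollary1002} with its roles assigned so that its sum-function is $S^f$ and its underlying function $t$ is $f$, one obtains at once that $\widehat{S^f}_m(n) = f(n)$ for every $m$ with $\mygcd(m,n) = 1$. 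Thus $f(n)$ equals a transform that lemma \ref{lemma001} evaluates in closed form, and the corollary follows by reading off that form.

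First I would apply lemma \ref{lemma001} to the multiplicative function $S^f$ and specialize to $\mygcd(m,n) = 1$, i.e. $t_i = 0$ for every $i$. With $t_i = 0$ the quantity $\mymin{t_i}{s_i}$ vanishes, so the inner sum $\sum_{b=1}^{\mymin{t_i}{s_i}}$ of equation \ref{eq1001} is empty and drops out; since $s_i \ge 1$ one has $\mytheta{t_i}{s_i} = \mytheta{0}{s_i} = 0$, hence $1 - \mytheta{t_i}{s_i} = 1$ and $p_i^{t_i} = 1$; and the exponent $s_i - t_i - 1$ reduces to $s_i - 1$. Each bracketed factor of equation \ref{eq1001} therefore collapses to $S^f(p_i^{s_i}) - S^f(p_i^{s_i - 1})$, giving $\widehat{S^f}_m(n) = \prod_{i=1}^r \bigl[ S^f(p_i^{s_i}) - S^f(p_i^{s_i - 1}) \bigr]$.

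Equating the two evaluations of $\widehat{S^f}_m(n)$ yields the claimed identity. As an independent check I would also verify it directly: both sides are multiplicative in $n$ (the right-hand side by construction, $f$ by hypothesis), so it suffices to match them at a single prime power, where equation \ref{eq10002} gives $S^f(p^s) = \sum_{b=0}^s f(p^b)$ and the difference $S^f(p^s) - S^f(p^{s-1})$ telescopes to $f(p^s)$.

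The main obstacle here is not analytic but organizational. One must correctly track which terms of lemma \ref{lemma001} survive the specialization $t_i = 0$ — in particular that the min-sum is genuinely empty rather than silently contributing a $b = 0$ term — and one must keep the notational roles straight, since the symbol $f$ denotes the multiplicative function being reconstructed in this corollary while it plays the part of the sum-function when corollary \ref{corollary1002} is invoked. Once these correspondences are fixed, no genuine computation remains.
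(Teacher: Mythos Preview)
Your proof is correct and follows essentially the same route as the paper: apply lemma~\ref{lemma001} to $S^f$ with $\mygcd(m,n)=1$ so that $t_i=0$, read off the product form, and then identify the transform with $f$ via M\"obius inversion. The only cosmetic difference is that you invoke corollary~\ref{corollary1002} (which already packages the step $\widehat{S^f}_m = \mu * S^f = f$), whereas the paper cites corollary~\ref{corollary002} directly and performs that convolution inline; your careful accounting of which terms survive when $t_i=0$ is in fact slightly more explicit than the paper's.
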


\begin{proof} From lemma \ref{lemma001} and formula \ref{eq015},
taking $S^f$ for $f$, and  choosing $m$ such that $\mygcd(m,n) =
1$, it follows that

\begin{eqnarray}\label{eq10004}
S^f*r_m(n) = \prod_{i = 1}^r \bigg[S^f(p_i^{s_i}) - S^f(p_i^{s_i - 1})\bigg]
\end{eqnarray}

because the condition $\mygcd(m,n) = 1$ implies that $t_i = 0, i
= 1,\cdots,  r$, resulting in  both the uncompleted sum  and the
symbol $\mytheta{t_i}{s_i}$ in equation \ref{eq1001} being zero.
With  corollary  \ref{corollary002}  equation  \ref{eq10003}  of
corollary  \ref{corollary1003}  follows directly  from  equation
\ref{eq10004}.

\end{proof}

Note that  corollary \ref{corollary1003} applies  generally, and
has no dependence  on the order of unity  $m$ anymore. Corollary
\ref{corollary1003} reduces  to the familiar expression  for the
Euler totient function, valid for $n\ne 1$, 

\begin{eqnarray}\label{eq100041}
\totphi(n) = \prod_{i = 1}^r \bigg[p_i^{s_i} - p_i^{s_i - 1}\bigg]
\end{eqnarray}

if $\totphi$ is chosen for $f$ and one uses that the identity is
the  sum-function  of  the  totient-function,  as  expressed  by
the well known formula,

\begin{eqnarray}\label{eq10006}
\id * \mu (n) = \totphi(n)
\end{eqnarray}

Conversely, equation \ref{eq10004} can be written as

\begin{eqnarray}\label{eq10005}
f*\mu(n) = \prod_{i = 1}^r \bigg[f(p_i^{s_i}) - f(p_i^{s_i - 1})\bigg]
\end{eqnarray}

in which  form it can be  viewed as a generalization  of formula
\ref{eq10006}.  If the  power function  $\id_k$, see  expression
\ref{eq10031} above, is chosen for  the function $f$ in equation
\ref{eq10005}, the well known relation

\begin{eqnarray}\label{eq10007}
\id_k*\mu(n) = J_k(n)
\end{eqnarray}

for the Jordan function $J_k(n)$, defined as 

\begin{eqnarray}\label{eq10008}
J_k(n) = n^k \prod_{i=1}^r \left(1 - \frac{1}{p_i^k} \right)
\end{eqnarray}

is  obtained.  $J_k(n)$  is  a  generalization  of  the  totient
function with $\totphi(n) = J_1(n)$.

\section{The GCD sum function}

The  sum   of  greatest  common  divisors   is  called  Pillai's
arithmetical function\cite{Pillai1933}.  An expression  for this
sum exists  \cite{Broughan2001}, and  reads, in the  notation of
the referred work,

\begin{eqnarray}\label{eq10001}
\sum_{j=1}^{p^{\alpha}} \mygcd(j,p^{\alpha})
& = &
(\alpha+1)p^\alpha - \alpha p^{\alpha-1} \label{eq10001-1}\\
& = &
(\alpha+1)\totphi(p^\alpha) + p^{\alpha-1} \label{eq10001-2}
\end{eqnarray}

This  sum  is  just  a  special case  of  the  discrete  Fourier
transform of  the greatest  common divisor corresponding  to the
case  $m=n$.  By  virtue  of lemma  \ref{lemma001}  and  theorem
\ref{th2}  this  sum  can  now  be generalized  to  the  sum  of
\textit{a function} of the greatest common divisor. Hence we can
formulate the following corollary.

\begin{corollary}\label{corollary003}

Given a  multiplicative function $f$,  the sum  over $k, k  = 1,
\cdots  , n$  of  the function  values  $f(\mygcd(k,n))$ of  the
greatest common divisor can be expressed in the prime factors of
the number $n = \prod_{i=1}^r p_i ^{s_i}$ in the following way

\begin{eqnarray}\label{eq1005}
\sum_{k=1}^n f(\mygcd(k,n)) =
\prod_{i = 1}^r \bigg[ 
f(p_i^{s_i}) + (p_i-1) \sum_{b=1}^{s_i}p_i^{b-1} f(p_i^{s_i-b}) \bigg]
\end{eqnarray}

If  the  function  $f$  is completely  multiplicative,  but  not
equal  to the  identity  function $\id$,  the  sum in  expression
\ref{eq1005} can be evaluated, yielding

\begin{eqnarray}\label{eq1006}
\sum_{k=1}^n f(\mygcd(k,n)) =
\prod_{i = 1}^r \bigg[ 
f(p_i^{s_i}) 
  + (p_i-1) f(p_i^{s_i-1}) \frac{f(p^{s_i})-p_i^{s_i} }{f(p^{s_i})-p_if(p_i^{s_i-1})}
  \bigg]
\end{eqnarray}


\end{corollary}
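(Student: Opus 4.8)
The plan is to recognize that the sum $\sum_{k=1}^n f(\mygcd(k,n))$ is precisely the discrete Fourier transform $\hat{f}_m(n)$ of equation \ref{eq000-0} evaluated at the single value $m = n$. Indeed, when $m = n$ the exponential factor becomes $e^{-k\frac{2\pi i}{n}n} = e^{-2\pi i k} = 1$ for every integer $k$, so equation \ref{eq000-0} collapses to the bare sum of $f$-values. Consequently I would obtain both displayed formulas by simply specializing Lemma \ref{lemma001} and Theorem \ref{th2} to this particular $m$, rather than by any new computation.

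First I would note that setting $m = n = \prod_{i=1}^r p_i^{s_i}$ forces the prime-power multiplicities of $m$ to coincide with those of $n$, i.e.\ $t_i = s_i$ for every $i$ (and $u = 1$). Substituting $t_i = s_i$ into the general expression \ref{eq1001} of Lemma \ref{lemma001} produces two simplifications: since $t_i = s_i$ we have $\mymin{t_i}{s_i} = s_i$, so the inner summation now runs up to $s_i$; and $\mytheta{t_i}{s_i} = \mytheta{s_i}{s_i} = 1$, so the correction term carrying the factor $(1 - \mytheta{t_i}{s_i})$ vanishes identically. What remains is exactly the product in equation \ref{eq1005}, which establishes the first assertion.

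For the completely multiplicative case I would carry out the analogous substitution $t_i = s_i$ in the closed form \ref{eq1003} of Theorem \ref{th2}. Once more $\mytheta{s_i}{s_i} = 1$ annihilates the middle term, while $\mymin{t_i}{s_i} = s_i$ replaces every occurrence of $\mymin{t_i}{s_i}$ by $s_i$ in the geometric-sum factor. The one remaining point to verify is the prefactor $(1 - \delta_{0t_i})$: since $s_i \ge 1$ by hypothesis and $t_i = s_i$, we have $\delta_{0t_i} = 0$, so this factor equals $1$ and suppresses no term. This yields precisely equation \ref{eq1006}.

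Because both parts reduce to direct specialization of already-proven results, I do not expect any genuine computational obstacle. The only real content is the opening observation that the $\mygcd$-sum is the $m = n$ instance of the transform; once that identification is made, Lemma \ref{lemma001} and Theorem \ref{th2} do all the remaining work, and the completely-multiplicative exclusion of $f = \id$ is inherited unchanged from Theorem \ref{th2}.
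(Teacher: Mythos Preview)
Your proposal is correct and follows essentially the same route as the paper: set $m=n$ so that $t_i=s_i$, whence $\mymin{t_i}{s_i}=s_i$ and $1-\mytheta{t_i}{s_i}=0$, and read off the two formulas from Lemma~\ref{lemma001} and Theorem~\ref{th2}. Your write-up is in fact slightly more explicit than the paper's, since you spell out why the exponential factor disappears and why the $(1-\delta_{0t_i})$ prefactor equals~$1$.
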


\begin{proof}[Proof of  corollary \ref{corollary003}]  By taking
the  discrete   Fourier  transform  with  $m=n$   in  expression
\ref{eq1001},  it follows  that $t_i  =  s_i, i  = 1,\cdots,  r$
implying that $\mymin{t_i}{s_i}=s_i$ and $1 - \mytheta{s_i}{t_i}
=   0$   and   consequently    the   statements   of   corollary
\ref{corollary003} follow immediately from lemma \ref{lemma001},
and theorem \ref{th2}.\end{proof}


\begin{table}[bth]
\begin{equation*}
\begin{array}{lll}
k,m&\mygcd(k,n)&h_m(n)\\
\hline
1&1&\totphi(n)\\
\vdots&\vdots&\vdots\\
p-1&1&\totphi(n)\\
p&p&(2p-1)(q-1)\\
p+1&1&\totphi(n)\\
\vdots&\vdots&\vdots\\
2p-1&1&\totphi(n)\\
2p&p&(2p-1)(q-1)\\
2p+1&1&\totphi(n)\\
\vdots&\vdots&\vdots\\
q-1&1&\totphi(n)\\
q&q&(p-1)(2q-1)\\
q+1&1&\totphi(n)\\
\vdots&\vdots&\vdots\\
p^2-1&1&\totphi(n)\\
p^2&p&(2p-1)(q-1)\\
p^2+1&1&\totphi(n)\\
\vdots&\vdots&\vdots\\
n-1&1&\totphi(n)\\
n&n&(2p-1)(2q-1)
\end{array}
\end{equation*}
\caption{\label{table001}\footnotesize\bfseries Representation of $h_m(n), n = pq$}
\end{table}

\section{Example}

Consider  the  case  $n  =  pq$,  with  $p$  and  $q$  different
prime numbers,  and $p  < q$. The  totient function  then equals
$\totphi(pq)=(p-1)(q-1)$. View the discrete Fourier transform of
the  greatest  common divisor  as  a  mapping  of the  array  of
greatest  common  divisors,  indexed  by  $k$,  into  the  array
$h_m(n)$, defined in equation  \ref{eq000-1}, and indexed by the
order  of  the  root  of  unity  $m$.  Thus,  $k$  and  $m$  run
over  the  same set  of  values,  namely $1\cdots  n$.  Applying
theorem  \ref{th1}  results  in  the  representation,  given  in
table  \ref{table001},  of  the  function  $h_m(n)$  defined  in
equation~\ref{eq000-1}.

It  should be  kept in  mind that  the explicit  layout of  such
tables depends on the actual values  of the prime factors of the
number $n$. In the layout  of table \ref{table001}, for example,
it is assumed that  $p^2 > q$, but this could  of course just as
well be the other way round.

Note that there  are, in this case, only  four possible outcomes
for  $h_m(n)$.  It  is  a  general  property  of  $h_m(n)$  that
relatively  few values  are  possible. Consider  the case  where
$n=p^s$, then,  from theorem \ref{th1}, the  following cases for
$m$ can be distinguished.

\begin{equation}
h_m(n) = \begin{cases}
\hfill\totphi(p^s)&\quad(\mygcd(m,n) = 1)\\
\hfill (t + 1)\totphi(p^s)&\quad(\mygcd(m,n) = p^t, t < s)\\
\hfill (s + 1)\totphi(p^s) + p^{s-1}&\quad(\mygcd(m,n) = p^s)\\
\end{cases}
\end{equation}

Obviously,  tables  similar  to   table  \ref{table001}  can  be
constructed  for  all  $n$  given  the  prime  factorization  of
$n$.  As another  example, consider  the case  $n=p^3q^2w$, with
$p$,  $q$, and  $w$  different prime  numbers.  This results  in
the  representation given  in  table  \ref{table002}, this  time
including  just  one typical  entry.  The  totient function  now
equals $\totphi(n) = (p^3-p^2)(q^2-q)(w-1)$.

\begin{table}[th]
\begin{equation*}
\begin{array}{lll}
k,m&\mygcd(k,n)&h_m(n)\\
\hline
1&1&\totphi(n)\\
\vdots&\vdots&\vdots\\
p^2q^2&p^2q^2&3\totphi(p^2)\left[3\totphi(q^2)+q\right]\totphi(w)\\
\vdots&\vdots&\vdots\\
n&n&\left[4\totphi(p^3) + p^2\right]\left[3\totphi(q^2)+q\right] \left[2\totphi(w) + 1\right]\\
\end{array}
\end{equation*}
\caption{\label{table002}\footnotesize\bfseries Representation of $h_m(n), n = p^3q^2w$}
\end{table}

\section{Conclusion}

The discrete  Fourier transform  of a  function of  the greatest
common divisor has been expressed directly in function values of
the prime  factors of the  argument. Thus, an altenative  way of
calculating the transform is  established. Instead of evaluating
a  sum of  terms  containing exponential  factors, an  exprssion
based on  function values of powers  of prime factors has  to be
evaluated.  Naturally this  approach requires  knowledge of  the
prime factors.

Specializing  to the  identity-function, relationships  with the
Möbius  function  and  the  Euler  totient  function  have  been
obtained. Furthermore, by taking the order $m$ equal to the root
of  unity $n$,  an expression  in prime  factors of  the sum  of
function values of the greatest  common divisor could be stated.
Examples of explicit layouts, based  on the order $m$, have been
presented.


\section*{Acknowledgement} This research  has been made possible
through funding by the faculty of health, science and technology
at Karlstad university.

\section*{References}

\bibliography{mybib}

\end{document}